\newtheorem*{thmm}{Theorem}
\newtheorem{prop}{Proposition}
\newtheorem{prob}{Problem}
\def\dollar{\$}
\title[The modular group and words]
{The modular group and words in its two generators}
\author[G. Alkauskas]{Giedrius Alkauskas}
\address{Vilnius University, Department of Mathematics and Informatics, Naugarduko 24, LT-03225 Vilnius, Lithuania}
\email{giedrius.alkauskas@mif.vu.lt}
\newcounter{noteno}\setcounter{noteno}{0}
\newenvironment{Example}
	{\refstepcounter{noteno}
	\begin{small}
	\medbreak\par\noindent{{\bf Example~\thenoteno}.\,}}
	{\hfill{$\Box$}\end{small}\par\medbreak}
\begin{document}
\begin{abstract}
Consider the full modular group $\sf{PSL}_{2}(\mathbb{Z})$ with presentation $\langle U,S|U^3,S^2\rangle$. Motivated by our investigations on quasi-modular forms and the Minkowski question mark function (so that this paper might be considered as a necessary appendix), we are lead to the following natural question. Some words in the alphabet $\{U,S\}$ are equal to the unity; for example, $USU^3SU^2$ is such a word of length $8$, and $USU^3SUSU^3S^3U$ is such a word of length $15$. We consider the following integer sequence. For each $n\in\mathbb{N}_{0}$, let $t(n)$ be the number of words in alphabet $\{U,S\}$ that equal the identity in the group. This is the new entry A265434 into the Online Encyclopedia of Integer Sequences. We investigate the generating function of this sequence and prove that it is an algebraic function over $\mathbb{Q}(x)$ of degree $3$. As an interesting generalization, we formulate the problem of describing all algebraic functions with a Fermat property. 
\end{abstract}
\date{\today}
\subjclass[2010]{Primary 05A05, 05A15, 20XX, 14H05}
\keywords{The modular group, combinatorial group theory, free group, unity, generating function, algebraic function, cogrowth rate, return generating function, word problem, pushdown automaton}
\thanks{The research of the author was supported by the Research Council of Lithuania grant No. MIP-072/2015.}

\maketitle

\section{Introduction}
\subsection{Motivation and results}
The topic of this paper is the following two new sequences. The first one $t(n)$, $n\geq 0$, starts from
\begin{eqnarray}
1, 0, 1, 1, 1, 5,  2, 14, 13, 31, 66, 77, 240, 286, 722, 1226, 2141, 4760, 7268, 16473,\ldots
\label{seka}
\end{eqnarray}  
This is the sequence A265434 in \cite{oeis}. The second sequence $\mathfrak{t}(n)$, $n\geq 0$, starts from
\begin{eqnarray*}  
1,0,1, 1, 0, 3, 0, 5, 3, 7, 16, 12, 50, 44, 123, 195, 301, 718, 928, 2244,\ldots
  \end{eqnarray*}
  They are defined as follows. Let $U^3=I$ and $S^{2}=I$ be two elements of order $3$ and $2$, respectively, $I$ being the unity. Consider the \emph{the full modular group} ${\sf PSL}_{2}(\mathbb{Z})$. It is known that it is freely generated by $U=\left(\begin{array}{cc}0 & 1 \\-1 & 1 \\ \end{array}\right)$, element of order $3$, and  and $S=\left(\begin{array}{cc}0 & 1 \\-1 & 0 \\ \end{array}\right)$, element of order $2$.  Let $n\in\mathbb{N}_{0}$, and $A$ be any word in the alphabet $U,S$ of total lenght $n$: 
\begin{eqnarray*}
A=\prod\limits_{j=1}^{n}(U^{\epsilon_{j}}S^{\delta_{j}}),\quad\epsilon_{j},\delta_{j}\in\{0,1\},\quad
\epsilon_{j}+\delta_{j}=1.
\end{eqnarray*}
Let $t(n)$ be the number of such words that in the group $\Gamma$ are equal to the unity. Our method to calculate the first $20$, $0\leq n\leq 19$, terms of the sequence (\ref{seka}) was via a brute force calculation. As the model, let $U$ and $S$ be given my $2\times 2$ matrices as above. Let us construct a binary tree, starting from the node $I=\left(\begin{array}{cc}1 & 0 \\0 & 1 \end{array}\right)$. Each node $A$ in this binary tree generates two offspring - the right one $AU$, and the left one $AS$. For a given $n$, we then calculate which of the $2^{n}$ matrices in the $n$th generation are equal to $\pm I$. Via this method, a standard home computer can give few more terms of this sequence. \\  

Further, we call such a word \emph{primitive}, if
\begin{eqnarray*}
\prod\limits_{j=1}^{s}(U^{\epsilon_{j}}S^{\delta_{j}})=I\text{ only if }s=n.
\end{eqnarray*} 
Let $\mathfrak{t}(n)$ be the number of primitive (and nonempty) words of length $n$. Let us introduce the generating functions
\begin{eqnarray*}
\sum\limits_{n=0}^{\infty}t(n)x^{n}=T(x),\quad
\sum\limits_{n=1}^{\infty}\mathfrak{t}(n)x^{n}=\mathfrak{T}(x).
\end{eqnarray*}
Obviously,
\begin{eqnarray*}
\frac{1}{1-\mathfrak{T}(x)}=T(x).
\end{eqnarray*}
Indeed, we are just considering words broken up into primitive loops, so 
\begin{eqnarray*}
T(x)=1+\big{(}\mathfrak{t}(1)x+\mathfrak{t}(2)x^2+\cdots\big{)}+
\big{(}\mathfrak{t}(1)x+\mathfrak{t}(2)x^2+\cdots\big{)}^2+\cdots.
\end{eqnarray*} 
The number of total words of length $n$ is equal to $2^{n}$. So, $t(n)\leq 2^{n}$, and thus both series converge at least for $|x|<\frac{1}{2}$. The Table gives values for this function (and all words) for $0\leq n\leq 8$.

\noindent\begin{center}
\begin{tabular}{|r | r | r|| l |}
\hline
\multicolumn{4}{|c|}{\textbf{Table. }Sequences $t(n)$ and $\mathfrak{t}(n)$. Primitive elements are in bold}\\
\hline
$n$ & $t(n)$& $\mathfrak{t}(n)$ & Products \\
\hline
$0$ & $1$& $0$& $I$        \\
$1$ & $0$& $0$& $-$      \\
$2$ & $1$& $1$& $\mathbf{S^2}$      \\
$3$ & $1$& $1$& $\mathbf{U^3}$        \\
$4$ & $1$& $0$& $S^4$        \\
$5$ & $5$& $3$& $U^3S^2$, $\mathbf{U^2S^2U}$, $\mathbf{US^2U^2}$, $S^2U^3$, 
$\mathbf{SU^3S}$    \\
$6$ & $2$&$0$& $S^6$, $U^6$        \\
$7$ & $14$& $5$& $U^3S^4$, $\mathbf{U^2S^4U}$, $\mathbf{US^4U^2}$, $S^4U^3$, $SU^3S^3$, $S^2U^2S^2U$, $\mathbf{SU^2S^2US}$, $U^2S^2US^2$  \\
$ $ & $ $& $ $& $S^2US^2U^2$, $\mathbf{SUS^2U^2S}$, $US^2U^2S^2$, 
$S^3U^3S$, $S^2U^3S^2$, $\mathbf{US^2US^2U}$ \\
$8$ & $13$& $3$& $S^8$, $U^6S^2$, $U^5S^2U$, $U^4S^2U^2$, $U^3S^2U^3$, $U^2S^2U^4$, $US^2U^5$, $S^2U^6$  \\
$ $ & $ $& $ $& $\mathbf{SU^6S}$, $U^3SU^3S$, $\mathbf{U^2SU^3SU}$, $\mathbf{USU^3SU^2}$, $SU^3SU^3$ \\
\hline
\end{tabular}
\end{center}

The integer $q(n,m)$ counts the number of words that are equal to the unity in the group $\Gamma$, and which contain $n$ copies of $U$ and $m$ copies of $S$. Note that
\begin{eqnarray*}
q(n,m)=0\text{ unless }n\equiv 0\text{ (mod } 3)\text{ and }m\equiv 0\text{ (mod } 2). 
\end{eqnarray*}

We also introduce
\begin{eqnarray*}
\sum\limits_{A\text{ is unity}}x^{\sum\epsilon_{j}}y^{\sum\delta_{j}}=
\sum\limits_{n,m=0}^{\infty}q(n,m)x^{n}y^{m}=Q(x,y).
\end{eqnarray*}
Obviously, $T(x)=Q(x,x)$. The series $T(x)$ can be interpreted as a return generating function for a certain directed graph (see the Subsection \ref{sub1.1} and Figure \ref{fig1}), and thus this function belongs to a hugely diverse and abundant family of functions with the following two features. All of them share the property that their Taylor coefficients grow exponentially, and all are related to the graph theory and enumeration (\cite{finch}, 5.6).\\

The main result of this paper is the following 
\begin{thmm}
\label{thm1}
The function $Q(x,y)$ is an algebraic $3$rd degree function over $\mathbb{Q}(x,y)$, satisfying \footnotesize
\begin{eqnarray*}
(y^6-x^6+6y^2x^3-3y^4+2x^3+3y^2-1)Q^3+(x^3y^2-y^4+x^3+2y^2-1)Q^2+(x^3-y^2+1)Q+1=0.
\end{eqnarray*}
\normalsize
In particular, the function $T(x)$ is an algebraic $3$rd degree function over $\mathbb{Q}(x)$, satisfying
\begin{eqnarray*}
(6x^5-3x^4+2x^3+3x^2-1)T^3+(x^5-x^4+x^3+2x^2-1)T^2+(x^3-x^2+1)T+1=0.
\end{eqnarray*}
\end{thmm}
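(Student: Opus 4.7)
My plan is to exploit the fact that ${\sf PSL}_{2}(\mathbb{Z}) = \langle U\rangle \ast \langle S\rangle \cong \mathbb{Z}/3 \ast \mathbb{Z}/2$ is a free product, so every group element has a unique normal form as an alternating sequence of syllables from the two factors ($U$ or $U^{2}$ interleaved with $S$). A word $w \in \{U,S\}^{\ast}$ can then be read letter-by-letter by a deterministic pushdown automaton whose stack stores the current normal form: reading $U$ pushes a fresh $U$-syllable when the top is empty or $S$, converts a top $U$ into $U^{2}$, or pops a top $U^{2}$; reading $S$ pushes an $S$-syllable on top of empty, $U$, or $U^{2}$, or pops a top $S$. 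The word equals $I$ iff the stack is empty at the end, so $\{w : w = I\}$ is an unambiguous context-free language and $Q(x,y)$ is algebraic by Chomsky--Sch\"utzenberger. To pin down the minimal polynomial, I write out the functional equations dictated by this PDA.

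Let $X_{U}(x,y)$ and $X_{S}(x,y)$ denote the generating functions for primitive words opening with $U$ and $S$ respectively, i.e.\ the first-return PDA excursions that begin by pushing a fresh $U$- or $S$-syllable. Then $\mathfrak{T}=X_{U}+X_{S}$ and $Q=(1-X_{U}-X_{S})^{-1}$. The core combinatorial step---and the only non-routine one---is the decomposition of a $U$-subexcursion. The bottom syllable begins as $U$ and can only transition to $U^{2}$ (by reading $U$ when the stack has depth exactly $1$), and it must have become $U^{2}$ before the final $U$ can pop it; hence this flip occurs \emph{exactly once}, splitting the loop into an initial $U$-push, arbitrarily many independent $S$-subexcursions at depth $1$, the flipping $U$, another cluster of independent $S$-subexcursions, and the final popping $U$. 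An $S$-subexcursion is simpler: an $S$-push, arbitrarily many independent $U$-subexcursions, an $S$-pop. This yields the system
\begin{equation*}
X_{U} \;=\; \frac{x^{3}}{(1-X_{S})^{2}}, \qquad X_{S} \;=\; \frac{y^{2}}{1-X_{U}}.
\end{equation*}

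The rest is elimination. Writing $q := 1/Q$, the identity $X_{U}+X_{S} = 1-q$ combined with $X_{S}(1-X_{U}) = y^{2}$ produces the quadratic
\[
X_{U}^{2} - (2-q)\,X_{U} + (1-q-y^{2}) = 0,
\]
while $X_{U}(1-X_{S})^{2} = x^{3}$ rewrites as $X_{U}(q+X_{U})^{2} = x^{3}$, a cubic in $X_{U}$. Reducing the cubic modulo the quadratic yields the linear relation $(3+q+y^{2})\,X_{U} = x^{3}+2-q-q^{2}-2y^{2}-qy^{2}$; substituting this back into the quadratic and clearing denominators produces a polynomial of degree $3$ in $q$ over $\mathbb{Q}[x,y]$, and the substitution $q = 1/Q$ followed by multiplication by $Q^{3}$ should reproduce the stated cubic in $Q$ verbatim. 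The specialization $y=x$ then immediately yields the cubic for $T(x)$. The only genuine obstacle is the combinatorial step in the previous paragraph---recognising that the $U \mapsto U^{2}$ flip of the bottom syllable happens exactly once; once that decomposition is in place, the algebra reduces to checking agreement of four coefficients.
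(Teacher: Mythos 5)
Your proposal is correct and is essentially the paper's second (PDA) proof of Section \ref{sec-alt}: your excursion generating functions are exactly $X_{U}=xf_{1}$ and $X_{S}=yf_{3}$ from the system (\ref{sys-in}) produced there by the Chomsky--Sch\"utzenberger construction (your two functional equations are precisely the last three equations of that system with $f_{2}$ eliminated), and I have checked that your elimination --- the quadratic in $X_{U}$ together with the reduction of $X_{U}(q+X_{U})^{2}=x^{3}$ --- does reproduce the stated cubic verbatim. The only cosmetic difference is that the paper eliminates by forming the cubic for $K=f_{3}$ and taking symmetric functions of its roots, whereas you solve for $X_{U}$ linearly in terms of $q=1/Q$; both routes are routine once the system is in place.
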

Thus,
\begin{eqnarray*}
\sum\limits_{n=0}^{\infty}\frac{t(n)}{2^{n}}=\frac{14}{13}+\frac{6}{13}\sqrt{17},\quad
\sum\limits_{n=0}^{\infty}\frac{t(n)}{2^{n}}\cdot\frac{1}{2^{n+1}}=0.5443390725_{+}.
\end{eqnarray*}
The last number can be interpreted as probability that a randomly chosen word is a unity, with a convention that each length $n$ is given a weight $2^{-(n+1)}$, and then probability is equally distributed among all words of length $n$.\\

Two proofs of this result are given. The first one is longer, but uses only elementary combinatorics and considerations from the scratch. The second one is shorter, is included due to a suggestion and very clear guidance by the referee, and is a standard proof in the area of geometric and combinatoric group theory.\\ 

We finish this Subsection with the following 
\begin{prop}For any prime $p>3$, $t(p)\equiv 0\text{ (mod }p)$.
\label{prop1}
\end{prop}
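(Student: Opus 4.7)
The plan is to apply the standard necklace-counting argument that underlies the combinatorial proof of Fermat's little theorem. Let $W_p$ denote the set of words of length $p$ in the alphabet $\{U,S\}$, so that $|W_p|=2^p$, and let $W_p^{\mathrm{id}}\subset W_p$ be those that evaluate to the identity in $\Gamma=\mathrm{PSL}_2(\mathbb{Z})$, so $|W_p^{\mathrm{id}}|=t(p)$.

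First I would define an action of the cyclic group $C_p=\langle\sigma\rangle$ on $W_p$ by cyclic rotation: $\sigma(a_1a_2\cdots a_p)=a_2\cdots a_pa_1$, with each $a_j\in\{U,S\}$. The key observation is that this action restricts to $W_p^{\mathrm{id}}$. Indeed, if $w=a_1a_2\cdots a_p=I$ in $\Gamma$, then
\begin{eqnarray*}
\sigma(w)=a_2\cdots a_pa_1=a_1^{-1}(a_1a_2\cdots a_p)a_1=a_1^{-1}\cdot I\cdot a_1=I,
\end{eqnarray*}
so cyclic rotation is conjugation by $a_1$ and preserves being the identity; note that $a_1^{-1}\in\Gamma$ even though it may not itself be a letter of the alphabet, which is harmless since we only need the group-theoretic value.

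Next I would invoke the orbit-stabilizer theorem. Since $p$ is prime, every $C_p$-orbit in $W_p^{\mathrm{id}}$ has size either $1$ or $p$. A word $w\in W_p$ has orbit size $1$ exactly when $\sigma(w)=w$, which forces all letters $a_1,\ldots,a_p$ to coincide; hence the only candidates for fixed points are $U^p$ and $S^p$. For a prime $p>3$ we have $\gcd(p,6)=1$, so $U^p\neq I$ (as $U$ has order $3$ and $3\nmid p$) and $S^p\neq I$ (as $S$ has order $2$ and $2\nmid p$). Thus $W_p^{\mathrm{id}}$ contains no fixed points, every orbit has size $p$, and $p\mid t(p)$.

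There is no real obstacle here beyond verifying the rotation-invariance, which is just the trivial conjugation identity above; the rest is pure orbit counting. As a minor remark, the hypothesis $p>3$ is sharp in both directions within this argument: the exceptional primes $2$ and $3$ are precisely the orders of the generators $S$ and $U$, and indeed $t(2)=1$ and $t(3)=1$ are not divisible by $2$ and $3$, respectively, because of the fixed points $S^2$ and $U^3$.
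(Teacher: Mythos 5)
Your proof is correct and follows essentially the same route as the paper's: both rest on the observation that a cyclic permutation of a word equal to the identity is again the identity (your conjugation identity is the paper's ``$AB=I$ implies $BA=I$''), followed by counting orbits of size $p$ and ruling out fixed points because $U^{p}\neq I$ and $S^{p}\neq I$ for $p>3$. You merely phrase the orbit count more formally via orbit--stabilizer, and your closing remark that $p=2,3$ fail precisely because of $S^{2}$ and $U^{3}$ is a nice sanity check consistent with the table ($t(2)=t(3)=1$).
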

\begin{proof}Indeed, if $AB=I$ in the group $\Gamma$, then $BA=I$ as well. So, any cyclic permutation of the word which is a unity is a unity again. So, words of length $p>3$ which are equal to the unity split into groups each containing exactly $p$ words. Indeed, otherwise all these permutations are equal. But $U^{p}\neq I$ and $S^{p}\neq I$ - a contradiction.
\end{proof}
\subsection{Context, previous results}
\label{sub1.1}
In \cite{alkauskas} we investigate the relation of modular forms to the Minkowski question mark function, introducing the notion of \emph{mean-modular forms}. In particular, the analytic continuation of a certain bivariate analytic function $G(\varkappa,z)$ (an extension of the Stieltjes transform of the Minkowski question mark function) requires us to know the analytic formula for $Q(x,y)$ as a bivariate function; whence the principal motivation for the current paper. In fact, the result of Theorem is new, but it is an exercise for people in the field. The much more ambitious program of integrating the world of Minkowski question mark function and the world of modular forms for $\sf{PSL}_{2}(\mathbb{Z})$ justifies the current paper as a necessary appendix to \cite{alkauskas}.\\
 
As just mentioned, this particular question is new, though many intricately related problems were investigated and solved before, and the topic itself is of big importance in the theory of groups (growth and cogrowth rates), graphs (return and first-return paths), and non-commutative probability.\\

As a particular example of his more general results, Kuksov \cite{kuksov} considers the cogrowth rate of the product $\mathbb{Z}/(2)\star\mathbb{Z}/(3)$, which is the same modular group. The question of investigating cogrowth rates amounts to the following. Count the number of \emph{reduced} words in the alphabet $\{U,U^{-1},S,S^{-1}\}$ that are equal to the unity. \emph{Reduced} means that $S$ and $S^{-1}$ , and also $U$ and $U^{-1}$ never follow immediately one after another. It is obvious then that the total number of reduced words of length $n$ is $4\cdot3^{n-1}$: the first letter can be anything, after that the choice is restricted. The generating function of this sequence (cogrowth series) turns out to be
\begin{eqnarray*}
v(x)=\frac{(x+1)\Big{(}9x^5-3x^4+8x^3-x^2+x-(6x^2-x+2)\sqrt{R(x)}\Big{)}}{2(3x-1)(3x^2+1)(3x^2+3x+1)(3x^2-x+1)},\\
\text{where }R(x)=81x^8-54x^7+9x^6-18x^5-8x^4-6x^3+x^2-2x+1.
\end{eqnarray*}
The cogrowth rate (the inverse of the radius of convergence of the Taylor series for this function at the origin) turns out to be $2.9249_{+}<3$. The function $v(x)$ is quadratic algebraic function, as opposed to cubic algebraic function $T(x)$ in our case. The Taylor coefficients of $v(x)$ are 
\begin{eqnarray*}
1, 0, 2, 2, 6, 24, 44, 136, 298, 914, 2462, 6464,\ldots
\end{eqnarray*} 
For example, there are $6$ reduced words of length $4$:
\begin{eqnarray*}
SSSS, S^{-1}S^{-1}S^{-1}S^{-1}, U^{-1}SSU, USSU^{-1}, U^{-1}S^{-1}S^{-1}U, US^{-1}S^{-1}U^{-1}.
\end{eqnarray*}
Also, Proposition \ref{prop1} does not have an analogoue in this case. \\

In a related direction, Quenell in \cite{quenell} investigates the \emph{return generating function} of Cayley graphs for the free products of finite groups. This is related to the thesis of McLaughlin \cite{thesis} (see also \cite{bartholdi}). In particular, in case of the modular group the return generating function counts words in $\{U, U^{-1}, S, S^{-1}\}$ of total length $n$ that are equal to the unity (words are not necessarily reduced). The total number of all words of spell length $n$ is $4^{n}$. In this context, the function $T(x)$ can be interpreted as a return generated function for a \emph{directed} Cayley graph for $\mathbb{Z}/(2)\star\mathbb{Z}/(3)$, where each $2$-cycle and $3$-cycle has a particular direction chosen in advance; see Figure \ref{fig1}.\\

Franz Lehner has pointed out that the question in consideration is a special case of a ``free convolution" and can be obtained via Voiculescu-Woess transform \cite{lehner,voiculescu, woess1, woess2}. This technique is implemented as a package and it is part of the library of FriCAS. So, the current paper can be thought as a purely combinatoric demonstration of the result, with emphasis on a bivariate function $Q(x,y)$ rather than a univariate $T(x)=Q(x,y)$.   

\begin{figure}
\epsfig{file=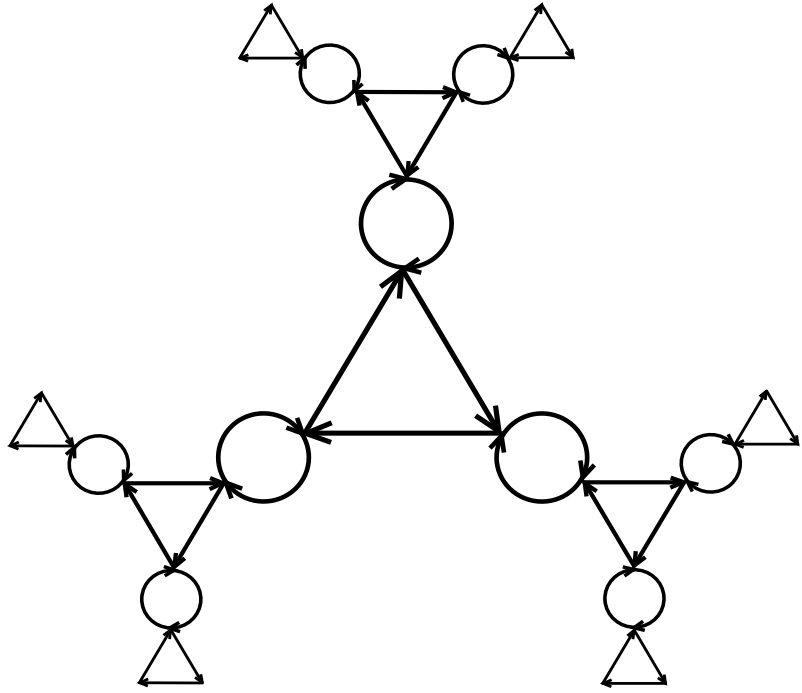,width=300pt,height=260pt,angle=0}
\caption{First few generations of a directed Cayley graph for $\mathbb{Z}/(2)\star\mathbb{Z}/(3)$. }
\label{fig1}
\end{figure}

\subsection{Algebraic functions with a Fermat property}
Before passing to the proof of the main result, we will formulate one interesting problem, which is motivated by Proposition \ref{prop1}.\\

Consider the following rational function
\begin{eqnarray*}
P(x)=\frac{x^2}{(1-2x)(1-x)}=\sum\limits_{n=2}^{\infty}s(n)x^{n}=
\sum\limits_{n=2}^{\infty}(2^{n-1}-1)x^{n}.
\end{eqnarray*}
Thus, we have $s(p)\equiv 0\text{ (mod }p)$ for $p>2$ prime.\\

Next, let
\begin{eqnarray*}
J(x)=\frac{1}{\sqrt{1-4x}}-\frac{2}{1-x}=\sum\limits_{n=0}^{\infty}s(n)x^{n}=\sum\limits_{n=0}^{\infty}\Big{(}\binom{2n}{n}-2\Big{)}x^{n}.
\end{eqnarray*}
This also gives $s(p)\equiv 0\text{ (mod }p)$ for $p\geq2$ prime.\\

Finally, as is implied by Proposition \ref{prop1}, we have the same (for $p>3$) conclusion for a degree $3$ algebraic function $T(x)=1+x^2+O(x^3)$, which satisfies
\begin{eqnarray*}
(6x^5-3x^4+2x^3+3x^2-1)T^3+(x^5-x^4+x^3+2x^2-1)T^2+(x^3-x^2+1)T+1=0.
\end{eqnarray*}
Let $R(x)=\sum\limits_{n=0}^{\infty}s(n)x^{n}\in\mathbb{Z}[[x]]$ be an algebraic function over $\mathbb{Q}(x)$, unramified at $x=0$.  Suppose, $s(p)\equiv 0\text{ (mod }p)$ for all sufficiently large prime numbers $p$. We call such a function $R(x)$ \emph{an algebraic function with a Fermat property}. Thus, we formulate
\begin{prob}
\label{prob1}
 Characterize all algebraic functions with a Fermat property in general, and in any particular algebraic function field $\mathbb{Q}(x,U)$, where $U$ is unramified at $x=0$.
\end{prob}
We can multiply the function $R(x)$ by an integer to get the congruence valid for all primes. All algebraic functions with a Fermat property form an abelian group $\mathscr{F}$. Since the set of all algebraic functions over $\mathbb{Q}$ is countable, $\mathscr{F}$ is also countable. If $U(x)=\sum_{n=0}^{\infty}a(n)x^{n}\in\mathbb{Z}[[x]]$ is an algebraic function, then $U^{\partial}(x):=xU'(x)=\sum_{n=0}^{\infty}na(n)x^{n}\in\mathscr{F}$. Indeed, first it is obvious that $U^{\partial}\in\mathbb{Z}[[x]]$. And second, if $G(Y,x)\in\mathbb{Z}[Y,x]$ and $G(U,x)=0$, then
\begin{eqnarray*}
U'=-\frac{G_{x}(U,x)}{G_{Y}(U,x)}
\end{eqnarray*} 
belongs to the same algebraic function field $\mathbb{Q}(x,U)$. Let $D$ (from ``Differential") be the union of all such possible $U^{\partial}$. Then $D$ is a subgroup of $\mathscr{F}$. So is $\mathbb{Z}[x]$. Finally, let $U(x)=\sum_{n=0}^{\infty}a(n)x^{n}\in\mathbb{Z}[[x]]$ is again an algebraic function, unramified and without a pole at $x=0$, and let for an integer $M\geq 2$, $U^{(M)}(x)=U(x^{M})$. Let $P$ (from ``Power") be the group whose elements are 
\begin{eqnarray*}
\sum\limits_{j=1}^{s}U_{j}^{(M_{j})},\quad M_{j}\geq 2.
\end{eqnarray*}
Then $P$ is also a subgroup of $\mathscr{F}$. Of course, any two of the subgroups $D$, $\mathbb{Z}[x]$ and $P$ have a non-trivial pairwise intersection. Let also for any $U$, algebraic over $\mathbb{Q}(x)$ and unramified at $x=0$, $\mathscr{F}_{U}=\mathscr{F}\cap\mathbb{Q}(x,U)$, $D_{U}=D\cap\mathbb{Q}(x,U)$, $P_{U}=P\cap\mathbb{Q}(x,U)$ (We identify any algebraic function with its Laurent expansion at $x=0$). We may refine Problem \ref{prob1} as follows.
\begin{prob}
\label{propb2}
Find the structure of abelian groups
\begin{eqnarray*}
\mathscr{F}/(D+\mathbb{Z}[x]+P),\quad \mathfrak{A}_{U}=\mathscr{F}_{U}/\big{(}D_{U}+\mathbb{Z}[x]\big{)},\quad
\mathfrak{P}_{U}=\mathscr{F}_{U}/\big{(}D_{U}+\mathbb{Z}[x]+P_{U}\big{)}
\end{eqnarray*}
for any $U$ algebraic over $\mathbb{Q}(x)$ and unramified at $x=0$. In particular, for example, what is the group $\mathfrak{P}_{\varnothing}$ (that is, we talk only about $\mathbb{Q}(x)$)?
\end{prob}
In fact, for a function $J(x)$ we have an even stronger property $s(p)\equiv 0\text{ (mod }p^{2})$. We may call it \emph{an algebraic function with a strong Fermat (or Wieferich) property}, and ask similar questions.
\section{The first proof}
Let us introduce the function $\hat{q}(n,m)$. This is defined similarly as $q(n,m)$. Namely, we count the number of words in $U,S$ which are equal to unity in $\Gamma$, which have $n$ copies of $U$ and $m$ copies of $S$, but which  do not contain $S^{2}$ (two $S's$ in a row). Let
\begin{eqnarray*}
\sum\limits_{n,m\geq 0}^{\infty}\hat{q}(3n,2m)x^{3n}y^{2m}=\widehat{Q}(x,y).
\end{eqnarray*}
\begin{prop}
\label{prop2}
We have an identity \begin{eqnarray*}
q(3n,2m)=\sum\limits_{k\geq 0}\hat{q}(3n,2m-2k)\cdot\binom{3n+k}{k}.
\end{eqnarray*}
This implies
\begin{eqnarray*}
Q(x,y)=\widehat{Q}\Big{(}\frac{x}{1-y^2},y\Big{)}\cdot\frac{1}{1-y^2}.
\end{eqnarray*}
\end{prop}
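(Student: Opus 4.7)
The plan is to establish the combinatorial identity for $q(3n,2m)$ via an explicit bijection on words, and then deduce the generating function identity by a routine manipulation of the negative binomial series. The crucial observation is that since $S^{2}=I$ in $\Gamma$, the element of $\Gamma$ represented by any word in $\{U,S\}$ depends only on the parities of the lengths of its maximal $S$-segments.

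First I would exploit the decomposition of any word $W\in\{U,S\}^{*}$ with $3n$ copies of $U$ into exactly $3n+1$ \emph{slots} (between consecutive $U$'s and at the two endpoints), each containing an $S$-segment of some length $l_{i}\ge 0$. Setting $r_{i}:=l_{i}\bmod 2$ and $k_{i}:=(l_{i}-r_{i})/2$, and replacing each $S^{l_{i}}$ by $S^{r_{i}}$, I obtain a reduced word $\widehat{W}$ that contains no $S^{2}$, represents the same element of $\Gamma$, and has $3n$ copies of $U$ and $2m-2k$ copies of $S$, where $k=\sum_{i}k_{i}$. The assignment $W\mapsto(\widehat{W},(k_{1},\ldots,k_{3n+1}))$ is then a bijection between words of the prescribed shape that equal $I$ and pairs consisting of a reduced word counted by $\hat{q}(3n,2m-2k)$ together with a composition of $k$ into $3n+1$ non-negative parts; the inverse map simply inserts $2k_{i}$ extra copies of $S$ into slot $i$ of $\widehat{W}$. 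Since the number of such compositions is $\binom{3n+k}{k}$ by stars-and-bars, this yields the claimed identity.

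To pass to generating functions, I would multiply by $x^{3n}y^{2m}$, sum over all $n,m,k\ge 0$, and substitute $j=m-k$ so that $y^{2m}=y^{2j}\cdot y^{2k}$. The summation then factors as
\begin{equation*}
Q(x,y)=\sum_{n,j\ge 0}\hat{q}(3n,2j)\,x^{3n}y^{2j}\sum_{k\ge 0}\binom{3n+k}{k}y^{2k}.
\end{equation*}
The inner sum equals $(1-y^{2})^{-(3n+1)}$ by the negative binomial theorem; pulling out the factor $(1-y^{2})^{-1}$ and absorbing $(1-y^{2})^{-3n}$ into $x^{3n}$ gives the required
\begin{equation*}
Q(x,y)=\frac{1}{1-y^{2}}\,\widehat{Q}\!\left(\frac{x}{1-y^{2}},\,y\right).
\end{equation*}

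There is no serious obstacle here: the only care needed is to verify that the parity-reduction on $S$-segments is simultaneously well-defined, preserves the group element (which is immediate from $S^{2}=I$), and admits an explicit slotwise inverse. Once the bijection is set up, the enumeration reduces to stars-and-bars and the generating function identity follows by standard manipulation.
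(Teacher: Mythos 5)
Your proof is correct and follows essentially the same route as the paper: reduce each maximal $S$-segment modulo $2$, observe that the $3n$ copies of $U$ create $3n+1$ slots into which the $k$ removed copies of $S^2$ can be re-inserted, and count the distributions by stars-and-bars to get $\binom{3n+k}{k}$. Your write-up is merely more explicit than the paper's about the bijection and about the negative-binomial manipulation that yields the generating-function identity.
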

\begin{proof}
Indeed, consider any word which has $3n$ copies of $U$ and $2m$ copies of $S$. Now, replace each occurring segment $S^{2\ell+j}$, $\ell\geq 0$, $j\in\{0,1\}$, with $S^{j}$. We get a word which lies in the set which defines $\hat{q}(3n,2m-2k)$ for some $k\in\mathbb{N}_{0}$. In the other direction, consider the latter word. Suppose, we have $k$ spare copies of $S^2$. We can plug $k$ copies of $S^{2}$ into such a word, to get a word which defines $q(3n,2m)$. As can be seen, we can confine in plugging to the left of each occurrence of $U$, plus to the right of the rightmost $U$. This gives $3n+1$ possible places to plug in. We want to distribute $k$ copies of $S^2$. This gives the formula in Proposition.      
\end{proof}

Let us divide the words which define the quantity $\hat{q}(n,m)$ into $7$ disjoint subsets. $B$ stands for any non-empty word. Let $n,m\geq 0$. Here everywhere ``---" stands for the phrase ``be the number of such words that are of the form...".
\begin{itemize}
\item[a)] $a(n,m)$ --- $SUBUS$.
\item[b)] $b(n,m)$ --- either $U^{\alpha}SBSU^{\beta}$, $\alpha,\beta>0$ and $\alpha+\beta\equiv 0\text{ (mod } 3)$, or $U^{\gamma}$, $\gamma\equiv 0\text{ (mod } 3)$.
\item[c)] $c(n,m)$ --- $U^{\alpha}SBS$, or $SBSU^{\alpha}$, $\alpha>0$, $\alpha\equiv 0\text{ (mod } 3)$.
\item[d)] $d(n,m)$ --- $U^{\alpha}SBSU^{\beta}$, $\alpha,\beta>0$, and $\alpha+\beta\equiv 2\text{ (mod } 3)$.
\item[e)] $e(n,m)$ --- $U^{\alpha}SBSU^{\beta}$, $\alpha,\beta>0$, and $\alpha+\beta\equiv 1\text{ (mod }3)$.
\item[f)] $f(n,m)$ --- either $SBSU^{\alpha}$, or $U^{\alpha}SBS$, $\alpha\equiv1\text{ (mod }3)$.
\item[g)] $g(n,m)$ --- either $SBSU^{\alpha}$, or $U^{\alpha}SBS$, $\alpha\equiv2\text{ (mod }3)$.
\end{itemize}
Let us also introduce two subsets which define $d$ and $f$, respectively, as follows:
\begin{itemize}
\item[$\mathfrak{d}$)]$\mathfrak{d}(n,m)$ --- $USBSU$,
\item[$\mathfrak{f}$)]$\mathfrak{f}(n,m)$ --- $SBSU$.
\end{itemize}
\begin{Example}We have: $\hat{q}(6,2)=5$. Words which corespond to $a,b,c$ are, respectively, $\{SU^6S\}$, $\{U^2SU^3SU,USU^3SU^2\}$, $\{SU^3SU^3,U^3SU^3S\}$. All sets beyond ``c" are empty. 
\end{Example} 
\begin{Example}We have: $\hat{q}(9,4)=20$. Words which correspond to $a(9,4)=7$, $b(9,4)=5$, $c(9,4)=3$, $d(9,4)=1$, $e(9,4)=0$, $f(9,4)=2$, $g(9,4)=2$,  are, respectively:\\
\noindent $a:\{SUSU^3SU^5S, SUSU^6SU^2S, SU^2SU^3SU^4S, SU^2SU^6SUS, SU^3SU^3SU^3S, SU^4SU^3SU^2S,\\
SU^5SU^3SUS\}$,\\
$b:\{USUSU^3SU^2SU^2, USU^2SU^3SUSU^2, U^2SUSU^3SU^2SU, U^2SU^2SU^3SUSU, SUSU^3SU^2SU^3\}$,\\
$c:\{SU^2SU^3SUSU^3,U^3SUSU^3SU^2S, U^3SU^2SU^3SUS\}$,\\
$d:\{USU^3SUSU^3SU\}$, \\
$e:\varnothing$,\\ 
$f:\{SU^3SU^2SU^3SU, USU^3SU^2SU^3S\}$,\\
$g:\{SU^3SUSU^3SU^2, U^2SU^3SUSU^3S\}$. 
\label{ex2}
\end{Example}

Note that $a(n,m)$ and other $8$ functions are potentially non-zero only for $n=3k$, $m=2l$ for $k\geq 0$, $l\geq 0$.\\ 

The first step to derive our main result is the following 
\begin{prop}
We have the following recurrences:
\begin{eqnarray*}
a(3n,2m)&=&b(3n,2m-2)+d(3n,2m-2)+e(3n,2m-2),\\
b(3n,2m)&=&\sum\limits_{k\geq 1}(3k-1)a(3n-3k,2m)\text{ for }m\geq 1,\quad b(3n,0)=1\text{ for }n>0,\\
c(3n,2m)&=&2\sum\limits_{k\geq 1}a(3n-3k,2m),\\
d(3n,2m)&=&\sum\limits_{k\geq 0}(3k+1)\mathfrak{d}(n-3k,2m),\\
e(3n,2m)&=&\sum\limits_{k\geq 1}3k\mathfrak{f}(n-3k,2m),\\
f(3n,2m)&=&2\sum\limits_{k\geq 0}\mathfrak{f}(n-3k,2m),\\
g(3n,2m)&=&2\sum\limits_{k\geq 0}\mathfrak{d}(n-3k,2m).
\end{eqnarray*}
These hold for $n,m\geq 0$ assuming that all these functions vanish if one of the arguments is negative.
\label{prop3}
\end{prop}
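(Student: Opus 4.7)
My plan is a direct bijective decomposition for each of the seven identities, parsing every word in the relevant subclass by its maximal initial and final runs of $U$ and invoking the group relation to read off the inner element $SBS\in\Gamma$. A word counted by $\hat q$ that contains at least one $S$ decomposes uniquely as $U^\alpha S B S U^\beta$, or a degenerate shape with $\alpha=0$ or $\beta=0$; here $\alpha,\beta\geq 0$, and the no-$S^2$ hypothesis forces the nonempty block $B$ to begin and end with $U$. The relation $U^\alpha SBSU^\beta = I$ rewrites as $SBS = U^{-\alpha-\beta}$ in $\Gamma$, and the residue of $\alpha+\beta\pmod{3}$ determines which of the seven classes $a$--$g$ the word sits in.

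For $a$, $b$, $c$ the inner element $SBS$ equals $I$. A class-$a$ word $SUBUS = I$ gives $UBU = I$, a nonempty identity word (still without $S^2$) starting and ending with $U$: the subclasses $b$, $d$, $e$ together exhaust such words, which yields the $a$-identity. For a nontrivial class-$b$ word with $\alpha+\beta = 3k$, the inner $SBS$ is a class-$a$ word, and the number of ordered pairs $(\alpha,\beta)$ with $\alpha,\beta\geq 1$ summing to $3k$ is $3k-1$; the trivial $U^{3n}$ supplies the boundary value $b(3n,0)=1$. Class $c$ is identical, with the factor $2$ accounting for the two mirror shapes $U^\alpha SBS$ and $SBS U^\alpha$.

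For $d$, $e$, $f$, $g$ the inner $SBS$ equals $U$ or $U^2$, so I normalize by absorbing the excess outer $U$'s into a minimal $\mathfrak{d}$-representative $USBSU$ (with $SBS=U$, matching $\alpha+\beta\equiv 2\pmod{3}$) or a minimal $\mathfrak{f}$-representative $SBSU$ (with $SBS=U^2$, matching $\alpha+\beta\equiv 1\pmod{3}$). For $d$: rewrite $U^\alpha SBSU^\beta = U^{\alpha-1}(USBSU)U^{\beta-1}$ and note there are $3k+1$ pairs $(\alpha,\beta)$ with $\alpha,\beta\geq 1$ and $\alpha+\beta = 3k+2$. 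For $e$: rewrite $U^\alpha SBSU^\beta = U^\alpha(SBSU)U^{\beta-1}$ and count the $3k$ such pairs with $\alpha+\beta = 3k+1$. For $f$: each $SBSU^\alpha$ with $\alpha=3k+1$ peels to a single $\mathfrak{f}$-word $(SBSU)U^{3k}$, and the mirror shape $U^\alpha SBS$ contributes equally via cyclic rotation in $\Gamma$ (which preserves both $=I$ and the letter counts), yielding the factor $2$. For $g$ with $\alpha=3k+2$: the $\mathfrak{d}$-word $USBSU$ reuses the same $SBS$ data (since $U\cdot SBS\cdot U = U^3 = I$ exactly when $SBS = U$), with factor $2$ again for the mirror. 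In each of these four cases the $U$-count lost in the normalization is exactly $3k$, so the inner $\mathfrak{d}$- or $\mathfrak{f}$-word has $U$-count $3n-3k$.

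The only real difficulty is careful bookkeeping: one must verify the uniqueness of the $(\alpha,\beta)$ decomposition in each subclass (immediate from the no-$S^2$ hypothesis), check that the mod-$3$ residue of $\alpha+\beta$ exactly matches the prescribed value of $SBS\in\{I,U,U^2\}$, and record the $U$-count drop consistently so that the resulting inner word lands in the set used for the subsequent recurrences. Once these are checked, summing over $k$ in each subclass assembles the stated system of equations.
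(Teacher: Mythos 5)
Your decomposition is correct and fills in exactly the case-by-case verification that the paper dismisses as ``straightforward'': parsing each word by its outer $U$-runs, reading off $SBS=U^{-\alpha-\beta}$, and counting the ordered pairs $(\alpha,\beta)$ in each residue class. The one step the paper does comment on (the $e$-recurrence, which it justifies via the cyclic-shift property $AB=I\Rightarrow BA=I$) you handle by the equivalent direct computation $SBS=U^{-(3k+1)}=U^{2}$, so the two arguments coincide in substance.
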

\begin{proof}All these equalities are straighforward. 
Only the formula for $e$ needs an explanation. Indeed, we note that (as already used in the proof of Proposition \ref{prop1}) if $AB=I$ in the group $\Gamma$, then $BA=I$. So, if $U^2BU^2=I$, this gives $BU=I$. 
\end{proof}
Let 
\begin{eqnarray*}
A(x,y)=\sum\limits_{n,m\geq 0}a(3n,2m)x^{3n}y^{2m}
\end{eqnarray*}
be the generating function of the coefficients $a(3n,2m)$, and similarly we define other bivariate functions $B,C,D,E,F,G,\mathfrak{D}$ and $\mathfrak{F}$. The identities of Proposition \ref{prop3} now read as
\begin{eqnarray*}
A=y^2B+y^2D+y^2E,&\quad&
B=\frac{x^3}{1-x^3}+\frac{x^3(x^3+2)}{(1-x^3)^2}A,\\
C=\frac{2x^3}{1-x^3}A,\quad
D=\frac{2x^3+1}{(1-x^3)^2}\mathfrak{D},&&
E=\frac{3x^3}{(1-x^3)^2}\mathfrak{F},\quad
F=\frac{2}{1-x^3}\mathfrak{F},\quad
G=\frac{2}{1-x^3}\mathfrak{D}.
\end{eqnarray*}
This gives
\begin{eqnarray}
A&=&\frac{y^2x^3}{1-x^3}+\frac{y^2x^3(x^3+2)}{(1-x^3)^2}A+  \frac{y^2(2x^3+1)}{(1-x^3)^2}\mathfrak{D}+\frac{3y^2x^3}{(1-x^3)^2}\mathfrak{F}.
\label{a-df}
\end{eqnarray}
Our function $\widehat{Q}$ is then
\begin{eqnarray*}
\widehat{Q}=1+A+B+C+D+E+G.
\end{eqnarray*}
That is,
\begin{eqnarray}
\widehat{Q}(x,y)=\frac{1}{1-x^3}+\frac{2x^3+1}{(1-x^3)^2}A
+\frac{3}{(1-x^3)^2}\mathfrak{D}+\frac{x^3+2}{(1-x^3)^2}\mathfrak{F}.
\label{tarpp}
\end{eqnarray}

\begin{Example}The sets which define $``d"$ through $``g"$ are non-empty starting only from the length $13$ (that is, where $3n+2m=13$). Thus, if we use the above recurences only minding the sets $``a"$, $``b"$ and $``c"$ (that is, assuming that $\mathfrak{D}=0$ and $\mathfrak{F}=0$), we obtain
\begin{eqnarray*}
A(x,y)\gg \frac{y^2x^3(1-x^3)}{(1-x^3)^2-y^2x^3(x^3+2)}.
\end{eqnarray*}
By the sign $\gg$ we mean that the inequality $\geq$ holds for Taylor coeffiencts of corresponding functions on the left and on the right.
This gives
\begin{eqnarray*}
\widehat{Q}(x,y)\gg 1+A+B+C\gg\frac{1-x^3-y^2x^3}
{(1-x^3)^2-y^2x^3(x^3+2)},
\end{eqnarray*}
and consequently
\begin{eqnarray*}
T(x)=\widehat{Q}\Big{(}\frac{x}{1-x^2},x\Big{)}\cdot\frac{1}{1-x^2}\gg\frac{(x-1)(x+1)(x^6+x^5-3x^4+x^3+3x^2-1)}
{1-5x^2-2x^3+10x^4+2x^5-9x^6+2x^7+5x^8-2x^9-x^{10}}.
\end{eqnarray*}  And indeed, minding the values given by (\ref{seka}), MAPLE confirms that the first disrepancy occurs only for $n=13$. Namely, $286>281$, and the five missing words are precisely those given by Example \ref{ex2} in the sets $``d"$ through $``g"$. For $n=14$ there are no words in the sets beyond $``c"$, so the above is in fact the equality $722=722$. The Galois group of the splitting field of the polynomial in the denominator is equal to $S_{10}$. This polynomial has the unique root $\theta$ of the smallest absolute value, it is real  and positive: $\theta=0.5394737936_{+}$, $\theta^{-1}=1.853658161_{+}$. So, this single observation gives the lower bound
\begin{eqnarray*}
t(n)>C(1.853658161)^n
\end{eqnarray*}
for a certain $C>0$. In fact, our main Theorem implies that for the function $t(n)$ we have the sharper bound
\begin{eqnarray*}
t(n)>C(1.971480194)^n,
\end{eqnarray*}
where $\phi^{-1}=1.971480194_{+}$, $\phi$ being the smallest (in absolute value) root of $x^7-20x^5+12x^4-8x^3-12x^2+4=0$, the factor of the discriminant of the cubic polynomial in the Theorem.
\end{Example}
To prove the formula for $A$, and hence for $\widehat{Q}$, we need to express $\mathfrak{D}$ and $\mathfrak{F}$ in terms of $A$. In order to accomplish this, we introduce the notion of a \emph{primitive} $\mathbf{a}$-\emph{word}. This, by definition, is the word which belongs to the subset which defines the function $``a"$ (an $\mathbf{a}-$\emph{word}), but which cannot be written as
\begin{eqnarray}
A_{1}U^{\alpha_{1}}A_{2}U^{\alpha_{2}}\cdots U^{\alpha_{s-1}}A_{s},\quad s\geq 2,\quad \alpha_{i}\geq 1,
\label{a-word}
\end{eqnarray}
where each $A_{i}$ is an $\mathbf{a}-$word.\\

Thus, let us continue our classification given by $a)$ through $\mathfrak{f})$, and introduce
\begin{itemize}
\item[$\mathfrak{a}$)]$\mathfrak{a}(n,m)$ --- number of primitive $\mathbf{a}$-words, which have $n$ copies of $U$ and $m$ copies of $S$.
\end{itemize}
\begin{Example}In the Example \ref{ex2} above, six of the $\mathbf{a}-$words are primitive, only $SU^3SU^3SU^3S$ is not.
\end{Example}
\begin{prop}We have the recurrence 
\begin{eqnarray*}
& &a(3n,2m)=\mathfrak{a}(3n,2m)
+\sum\limits_{k=1}^{\infty}\sum\limits_{3n_{1}+3n_{2}=3n-3k
\atop 2m_{1}+2m_{2}=2m}\mathfrak{a}(3n_{1},2m_{1}) \mathfrak{a}(3n_{2},2m_{2})\\
&+&
\sum\limits_{k=1}^{\infty}\binom{3k-1}{1}\sum\limits_{3n_{1}+3n_{2}+3n_{3}=3n-3k
\atop 2m_{1}+2m_{2}+2m_{3}=2m}\mathfrak{a}(3n_{1},2m_{1}) \mathfrak{a}(3n_{2},2m_{2})\mathfrak{a}(3n_{3},2m_{3})\\
&+&
\sum\limits_{k=1}^{\infty}\binom{3k-1}{2}\sum\limits_{3n_{1}+3n_{2}+3n_{3}+3n_{4}=3n-3k
\atop 2m_{1}+2m_{2}+2m_{3}+2m_{4}=2m}\mathfrak{a}(3n_{1},2m_{1}) \mathfrak{a}(3n_{2},2m_{2}) \mathfrak{a}(3n_{3},2m_{3})\mathfrak{a}(3n_{4},2m_{4})\\
&+&\cdots.
\end{eqnarray*}
\label{prop4} 
\end{prop}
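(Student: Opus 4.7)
The plan is to set up a bijection between $\mathbf{a}$-words with $3n$ copies of $U$ and $2m$ copies of $S$, and finite tuples $(A_1,\alpha_1,A_2,\alpha_2,\ldots,A_s)$ in which each $A_i$ is a primitive $\mathbf{a}$-word and each $\alpha_i\geq 1$, via the concatenation $w=A_1U^{\alpha_1}A_2U^{\alpha_2}\cdots U^{\alpha_{s-1}}A_s$. Granting the bijection, the counting is mechanical: since each $A_i$ equals $I$ in $\Gamma$, the separator product $U^{\sum\alpha_i}$ must itself be $I$, so $\sum\alpha_i=3k$ for some $k\geq 0$, with $k=0$ forced exactly when $s=1$. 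The $s=1$ case contributes $\mathfrak{a}(3n,2m)$; for $s\geq 2$ and fixed $k\geq 1$, the number of ways to split $3k$ into $s-1$ positive summands is $\binom{3k-1}{s-2}$ by stars-and-bars, while the $U$- and $S$-counts force $\sum n_i=n-k$ and $\sum m_i=m$. Summing over $s$ and $k$ reproduces the stated recurrence term by term.

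For the bijection I would use a greedy decomposition: set $A_1$ to be the shortest prefix of $w$ that is itself an $\mathbf{a}$-word (one exists because $w$ itself qualifies), then strip $A_1$ together with the ensuing maximal $U$-block $U^{\alpha_1}$ and recurse on the suffix. By minimality $A_1$ is automatically primitive, for any non-trivial decomposition of $A_1$ would furnish an even shorter $\mathbf{a}$-word prefix. Conversely, in any valid decomposition into primitive pieces, the first piece must be the shortest $\mathbf{a}$-word prefix --- this is the content of the structural sub-lemma below --- so $A_1$, $\alpha_1$, and the remainder are uniquely determined, and uniqueness propagates by induction.

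The heart of the argument is the following sub-lemma, a statement about normal forms in $\mathbb{Z}/2\ast\mathbb{Z}/3$: every non-empty word $r$ which starts with $S$, ends with $S$, contains no factor $S^2$, and equals a power of $U$ in $\Gamma$ admits a decomposition $r=B_1U^{\beta_1}\cdots B_t$ with each $B_i$ an $\mathbf{a}$-word and each $\beta_i\geq 1$ (taking $t=1$ when $r=I$). Two consequences follow immediately: any $\mathbf{a}$-word with a strictly shorter $\mathbf{a}$-word prefix admits a further decomposition and is therefore not primitive; and the greedy algorithm never stalls, since after stripping $A_1U^{\alpha_1}$ the remainder equals $U^{-\alpha_1}$ in $\Gamma$ and the sub-lemma applies. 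To produce the first piece $B_1$ when $r\neq I$: the normal form of $r$ is a nontrivial power of $U$ and hence begins with $U$, so the leading $S$ of $r$ must have cancelled during reduction. Letting $j$ be the smallest index with $\rho_j:=w_1\cdots w_j=I$ in $\Gamma$, I claim the $j$-th letter is $S$ --- otherwise $\rho_{j-1}=U^2$, but then the prefix $w_1\cdots w_{j-1}$ is a shorter word starting with $S$ and equal to $U^2$, which by the same logic would itself require an even earlier return, contradicting minimality. Thus $B_1=w_1\cdots w_j$ is a genuine $\mathbf{a}$-word prefix, and induction on length yields $B_2,\ldots,B_t$. This normal-form bookkeeping is the only real obstacle; once in hand, Proposition \ref{prop4} is pure composition.
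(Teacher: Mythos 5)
Your argument is correct and takes essentially the same route as the paper's proof: decompose each $\mathbf{a}$-word as primitive $\mathbf{a}$-words separated by powers of $U$ whose exponents sum to a multiple of $3$, then count compositions of $3k$ into positive parts via $\binom{3k-1}{s}$. The paper dispatches the decomposition with a one-line appeal to induction and never addresses its uniqueness (which the count tacitly requires), so your greedy construction and normal-form sub-lemma simply supply details the published proof leaves implicit.
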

\begin{proof}We note that each $\mathbf{a}-$word is either primitive, or can be written in the form (\ref{a-word}), where each $A_{i}$ is a primitive $\mathbf{a}-$word. This claim follows by induction. Now we are left to count, which gives the above formula. Here $\binom{3k-1}{s}$ stands for a number of ways the number $3k$ can be written as a sum of $s+1$ positive integers. 
\end{proof}
Let
\begin{eqnarray*}
W(x,y)=\sum\limits_{n,m\geq 0}\mathfrak{a}(3n,2m)x^{3n}y^{2m}.
\end{eqnarray*}
The identity in Proposition \ref{prop4} can be written as
\begin{eqnarray*}
A=W+\sum\limits_{k=1}^{\infty}\sum\limits_{s\geq 0}\binom{3k-1}{s}x^{3k}W^{s+2}=
W+\sum\limits_{k=1}^{\infty}x^{3k}W^{2}(1+W)^{3k-1}=
W+\frac{x^3W^2(1+W)^2}{1-x^3(1+W)^3}.
\end{eqnarray*}
Analogously we derive recurrences for $\mathfrak{d}$ and $\mathfrak{f}$ in terms of $\mathfrak{a}$, which lead to the identities  
\begin{eqnarray*}
\mathfrak{D}=\frac{x^3W^2}{1-x^3(1+W)^3},\quad \mathfrak{F}=
\frac{x^3W^2(1+W)}{1-x^3(1+W)^3}.
\end{eqnarray*}
Plugging all these three identities for $A$, $\mathfrak{D}$ and $\mathfrak{F}$ into (\ref{a-df}), we readily obtain that $W$ is an algebraic function:\begin{small}
\begin{eqnarray*}
W=\frac{y^2x^3(1-x^3)}{(1-x^3)^2-y^2x^3(x^3+2)}-
\frac{W^2x^3(1+W)^2}{1-x^3(1+W)^3}
+\frac{W^2x^3y^2(5x^3+1+3x^3W)
}{[1-x^3(1+W)^3][(1-x^3)^2-y^2x^3(x^3+2)]}.
\end{eqnarray*}
\end{small}
MAPLE simplifies this to a very elegant form
\begin{eqnarray}
W=x^3(W+1)^2(W+y^2).
\label{ww}
\end{eqnarray}
So, $W$ is a third degree algebraic function over $\mathbb{Q}(x,y)$, and thus so is $Q$. The exact form of the cubic equation can be easily calculated with MAPLE, but we rather concentrate on $T(x)$, since, first, the equation for $Q$ with the help of method in Section \ref{sec-alt} can be easily calculated by hand, and second, the cubic equation for $W$ is very convenient to calculate fast the Taylor coefficients of $T$ recurrently.
Let
\begin{eqnarray*}
Z(x)=W\Big{(}\frac{x}{1-x^2},x\Big{)}.
\end{eqnarray*}
Then the equation for $Z$ reads as
\begin{eqnarray}
Z=\frac{x^{3}(Z+1)^2(Z+x^2)}{(1-x^2)^3}.
\label{z}
\end{eqnarray}
We are left to verify the cubic equation given in the formulation of the Theorem. Plugging known values into (\ref{tarpp}) and using Proposition \ref{prop2}, we obtain
\begin{eqnarray}
T(x)=\frac{(1-x^2)^2(1+Z)}{1-3x^2-x^3+3x^4-x^6-3x^3Z-3x^3Z^2-x^3Z^3}.
\label{t}
\end{eqnarray}
Now, plug this value of $T(x)$ into the cubic equation given by the Theorem. Then factor the numerator. MAPLE confirms that one of the two multipliers is indeed $Z(1-x^2)^3-x^3(Z+1)^2(Z+x^2)$, so the equation for $T(x)$ is verified. In fact, this equation was discovered by Robert Israel by finding a $3$rd degree algabraic function whose first $42$ Taylor coefficients coincide with $t(n)$, $0\leq n\leq 41$. Our theoretical result thus double-checks this fact.\\

The equalities (\ref{z}) and (\ref{t}) give a polynomial-time method to calculate coefficients $t(n)$. Indeed, let us start from $Z_{1}(x)=x^5$ (the first primitive $\mathbf{a}-$word is $SU^3S$), and let us define polynomials $Z_{N}(x)\in\mathbb{Z}[x]$ recurrently by 
\begin{eqnarray*}
Z_{N+1}=\frac{x^{3}(Z_{N}+1)^2(Z_{N}+x^2)}{(1-x^2)^3}\text{ (mod }x^{3N+6}).
\end{eqnarray*}
Thus, $Z_{N}$ is of degree $3N+2$, $Z_{N+1}\equiv Z_{N}\text{ (mod }x^{3N+3})$. After reaching enough terms, plug this into (\ref{t}). This agrees perfectly with (\ref{seka}), which was calculated by a direct count. Robert Israel calculated $2000$ terms of the sequence $t(n)$. 

\section{The short proof using PDA}
\label{sec-alt}
The following alternative proof was proposed by the referee. We reproduce it almost \emph{verbatim}, since the ideas are clear and self-explanatory. The \emph{PDA} stands for a \emph{pushdown automaton}. Both proofs give exactly the same algebraic equation for the function $Q$.\\

The group $\Gamma=\sf{PSL}_{2}(\mathbb{Z})$ is \emph{virtually free}, so in the framework of geometric and combinatoric group theory it is known that the \emph{word problem} (the set of all words in generators and inverses that are equal 1) is an unambiguous context-free language. However, for our purposes in \cite{alkauskas} we need the regular language of all \emph{positive words} $\{U,S\}^{*}$ to obtain the language of words counted by the generating functions $T$ and $Q$. It follows immediatelly from the Chomsky-Sch\"{u}tzenberger enumeration theorem that the functions $T$ and $Q$ are algebraic.\\

To obtain the explicit formulae, we can explicitly construct a PDA accepting the language, then follow standard methods to obtain generating functions from the PDA.\\

The language is very simple: a word equals $1$ if some sequence of applications of the rules $S^2\rightarrow 1$, $U^3\rightarrow 1$ to factors of
the input word reduces it to the empty word. This can be done using
a PDA which has just one state, and a pushdown stack which
uses the alphabet $\{0,1,2,3\}$ where $0$ is the bottom-of-stack marker, $1$ means a single $U$, $2$ means $U^2$, and $3$ means $S$. So that the model of a PDA can be used that accepts
on empty stack. Let us introduce a symbol $\dollar$, and consider the language $L\dollar=\{w\dollar\,|\,w\in\{U,S\}^{*},w=_{\Gamma}1\}$.\\

Here are the transitions:
\begin{itemize}
\item[1)]$U,0\rightarrow 10$
\item[2)]$U,1\rightarrow 2$
\item[3)]$U,2\rightarrow \epsilon$
\item[4)]$U,3\rightarrow 13$
\item[5)]$S,0\rightarrow 30$
\item[6)]$S,1\rightarrow 31$
\item[7)]$S,2\rightarrow 32$
\item[8)]$S,3\rightarrow \epsilon$
\item[9)]$\dollar,0\rightarrow \epsilon$
\end{itemize}

This is indeed self-explanatory. For example, take the item $4)$. This means $U$ is the next letter, and $3$ is on the top of the stack (that is, $S$). Since $US$ does not reduce, replace $3$ with $13$. On the other hand, consider the item $3)$. It stands for a move that now we get a factor $U^{3}$, which is removed.\\

The idea is that one reads a word in $U,S$ and puts it into \emph{normal form} on the fly, reducing $U^3$, $S^2$ whenever they appear as one moves right. The normal form of the prefix of the input word is written (in code) on the stack. Accept if at the end the stack is empty (the normal form is $1$).\\

Now we follow \cite{hopcroft} to convert the deterministic PDA into an unambiguous context free grammar. Let $N_{i}$ stand for the nonterminal $N_{q,i,q}$ (since there exists only one state). Start symbol is $N_{0}$. We get
\begin{itemize}
\item[1)]$N_{0}\rightarrow UN_{1}N_{0}$
\item[2)]$N_{1}\rightarrow UN_{2}$
\item[3)]$N_{2}\rightarrow U$
\item[4)]$N_{3}\rightarrow UN_{1}N_{3}$
\item[5)]$N_{0}\rightarrow SN_{3}N_{0}$
\item[6)]$N_{1}\rightarrow SN_{3}N_{1}$
\item[7)]$N_{2}\rightarrow SN_{3}N_{2}$
\item[8)]$N_{3}\rightarrow S$
\item[9)]$N_{0}\rightarrow \dollar$
\end{itemize}
Next, following the method of Chomsky-Sch\"{u}tzenberger 
we obtain the following system of equations for the generating function directly from the grammar:
\begin{eqnarray}
\left\{\begin{array}{l}
f_{0}=xf_{1}f_{0}+yf_{3}f_{0}+z,\\
f_{1}=xf_{2}+yf_{3}f_{1},\\
f_{2}=x+yf_{3}f_{2},\\
f_{3}=xf_{1}f_{3}+y.
\end{array}
\right.
\label{sys-in}
\end{eqnarray}
That is, we replace $U$ and $S$ with $x$ and $y$, respectively, and add the corresponding terms. If $(f_{0},f_{1},f_{1},f_{3})$ is the solution, then $Q(x,y)=\frac{f_{0}}{z}$. Note that the variable $z$ appears only in the extression for $f_{0}$ only as a linear factor $z$.\\

Solving easily by hand the last three equations of (\ref{sys-in}) gives the cubic equation for $f_{3}=K$:
\begin{eqnarray}
y^2K^3-(2y+y^3)K^2+(1+2y^2-x^3)K-y=0.
\label{cub}
\end{eqnarray}
Plugging everything into the first equation of (\ref{sys-in}), we get 
\begin{eqnarray}
Q(x,y)=\frac{K}{y(1-K^2)}.
\label{q-k}
\end{eqnarray}
Let $K_{1}$, $K_{2}$ and $K_{3}$ be three distinct roots of (\ref{cub}), and let $\sigma_{1}=K_{1}+K_{2}+K_{3}$, $\sigma_{2}=K_{1}K_{2}+K_{1}K_{3}+K_{2}K_{3}$, $\sigma_{3}=K_{1}K_{2}K_{3}$ be the standard symmetric polynomials. Then
\begin{eqnarray*}
\sigma_{1}=\frac{2+y^2}{y},\quad \sigma_{2}=\frac{1+2y^2-x^3}{y^2},\quad \sigma_{3}=\frac{1}{y}.
\end{eqnarray*}
 Let $Q_{i}$, $1\leq i\leq 3$, are obtained from (\ref{q-k}) by plugging $K_{i}$ instead of $K$. We get
\begin{eqnarray*}
\frac{1}{Q_{1}}+\frac{1}{Q_{2}}+\frac{1}{Q_{3}}&=&y\sum \frac{1}{K_{i}}-y\sum K_{i}=\frac{y\sigma_{2}}{\sigma_{3}}-y\sigma_{1}=y^2-x^3-1,\\
\frac{1}{Q_{1}Q_{2}}+\frac{1}{Q_{2}Q_{3}}+\frac{1}{Q_{1}Q_{3}}&=&\frac{y^2\sigma_{1}}{\sigma_{3}}+y^2\sigma_{2}-
y^2\frac{(\sigma_{1}\sigma_{2}-3\sigma_{3})}{\sigma_{3}}
=x^3y^2-y^4+x^3+2y^2-1,\\
\frac{1}{Q_{1}Q_{2}Q_{3}}&=&\frac{y^3}{\sigma_{3}}-y^3\sigma_{3}
-y^{3}\frac{\sigma_{1}^2-2\sigma_{2}}{\sigma_{3}}+y^3\frac{\sigma_{2}^{2}-2\sigma_{1}\sigma_{3}}{\sigma_{3}}\\
&=&x^6-y^6-6y^2x^3+3y^4-2x^3-3y^2+1.
\end{eqnarray*}
We thus get the cubic equation for $\frac{1}{Q}$, and the reciprocal of it is the equation for $Q$, exactly as formulated in the Theorem.

\subsection*{Acknowledgements} I would like to thank Wadim Zudilin, Murray Elder and Franz Lehner for sending me references and pointing out that these results are a small part of the much richer field in the theory of groups and graphs. I sincerely thank Roland Bacher, and also Robert Israel and activists of OEIS for helping with MAPLE and the sequence $t(n)$ itself. I thank Audrius Alkauskas for a picture. I especially thank the anonymous referee: all the text in Section \ref{sec-alt}, up to the system (\ref{sys-in}), is almost verbatim taken from the referee's report.

\end{document}